\theoremstyle{remark}
\newtheorem*{df*}{\bf \indent Definition}
\newtheorem*{conj*}{\bf \indent Conjecture}
\newtheorem*{thm*}{\bf \indent Theorem}
\newtheorem*{lem*}{\bf \indent Lemma}
\newtheorem*{rem*}{\bf \indent Remark}
\newtheorem*{cor*}{\bf \indent Corollary}
\numberwithin{equation}{section}
\DeclareMathOperator{\lcm}{lcm}
\begin{document}

\begin{center}
\Large \textbf{On a Conjecture on Sharygin Triangles}
\end{center}

\begin{center}
\textbf{N.\,N.~Osipov}\\
Siberian Federal University\\
Krasnoyarsk, Russia
\end{center}

\begin{abstract}
By a simple method we prove the following conjecture on Sharygin triangles: there is only one Sharygin triangle (up to an isometry) whose vertices are chosen from the set of vertices of a regular polygon inscribed in a circle of radius $1$.

\textbf{Keywords:} \emph{Sharygin triangle, trigonometric diophantine equation}.
\end{abstract}

\tableofcontents

\section{Introduction and Preliminaries}
\label{sec-1}

\hspace{0.5cm} In the paper, we will use the same terminology as in \cite{Netay&Savvateev-2017}. Let us recall the most important definitions.

For a triangle $ABC$, let $A_1$, $B_1$, and $C_1$ be the intersection points of the \emph{internal bisectors} with opposite sides (here, $A_1$ lies on $BC$ etc.). We call the triangle $A_1B_1C_1$ the \emph{bisectral triangle} of a given triangle $ABC$.

\begin{df*}
A non-isosceles triangle $ABC$ is called a \emph{Sharygin triangle} if its bisectral triangle $A_1B_1C_1$ is isosceles.
\end{df*}

Surprisingly, but Sharygin triangles exist. Furthermore, each of them must have an obtuse angle lying in a very small interval between $\approx 102{.}663^\circ$ and $\approx 104{.}478^\circ$ (see for more details the solution of Problem 203 in \cite[Sec. 2]{Sharygin-1988}).

The most famous example of a Sharygin triangle $ABC$ is the so-called \emph{heptagonal triangle} with angles
\begin{equation}
\label{eq-1.1}
\angle A=\pi/7, \quad \angle B=2\pi/7, \quad \angle C=4\pi/7.
\end{equation}
For this triangle, we have
\begin{equation}
\label{eq-1.2}
C_1A_1=C_1B_1.
\end{equation}
The heptagonal triangle was firstly studied in \cite{Bankoff&Garfunkel-1973}. Later, it has been rediscovered several times (see, for instance, \cite{Tokarev-2006}).

It is easy to see that all the vertices of the heptagonal triangle can be placed in some vertices of a regular heptagon. Many different properties of the heptagonal triangle can be found in \cite[Ch. III]{Modenov-1979} (see Problems 11, 23, 45 in Sec. 1 and  14 in Sec. 2).

In \cite{Netay&Savvateev-2017}, the authors asked the following natural question: ``Are there other regular polygons such that three of its vertices form a Sharygin triangle?''. The computer experiments led them to the following conjecture (see Hypothesis 2.2 in \cite{Netay&Savvateev-2017}) which has been verified for all regular $N$-gons with $N \leqslant 2000$.

\begin{conj*}
Suppose that the vertices of a Sharygin triangle coincide with three vertices of a regular polygon. Then this triangle is similar to the heptagonal triangle.
\end{conj*}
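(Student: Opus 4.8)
The plan is to convert the isosceles condition into a single trigonometric Diophantine equation and then to analyse it as a vanishing sum of roots of unity. By the inscribed-angle theorem, three vertices of a regular $N$-gon span a triangle whose angles are $\pi a/N$, $\pi b/N$, $\pi c/N$ for positive integers with $a+b+c=N$, and conversely every such triple occurs. Passing to a smaller polygon if necessary, I may assume $\gcd(a,b,c)=1$; since the triangle is non-isosceles the integers are distinct, say $a<b<c$, and by the result quoted from Problem 203 of \cite{Sharygin-1988} the largest angle $C=\pi c/N$ is obtuse and lies in the interval $(102.663^\circ,104.478^\circ)$, so $c/N$ is confined to a short neighbourhood of $4/7$. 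The heptagonal triangle is the quadruple $(a,b,c,N)=(1,2,4,7)$, and the goal is to show it is the only admissible one.

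Next I would write down the Sharygin condition explicitly. Put $\theta=\pi/N$ and let the side lengths opposite the respective angles be $p=\sin a\theta$, $q=\sin b\theta$, $r=\sin c\theta$. Locating $A_1,B_1,C_1$ on the sides by the angle-bisector theorem and applying the law of cosines in the triangles $BA_1C_1$ and $AB_1C_1$, the equality $C_1A_1=C_1B_1$ (the configuration of the heptagonal triangle, in which the equal segments issue from the foot of the bisector of the obtuse angle; the two other possible equalities are handled in the same way and excluded by the same interval argument) becomes, after clearing denominators,
\begin{multline*}
p^2(p+r)^2\bigl[(q+r)^2+(p+q)^2-2(p+q)(q+r)\cos b\theta\bigr] \\
= q^2(q+r)^2\bigl[(p+r)^2+(p+q)^2-2(p+q)(p+r)\cos a\theta\bigr].
\end{multline*}
Substituting $\zeta=e^{i\theta}$, $2\cos k\theta=\zeta^k+\zeta^{-k}$, $2i\sin k\theta=\zeta^k-\zeta^{-k}$ and multiplying by a suitable power of $2$, this real identity collapses to a vanishing $\mathbb{Z}$-linear relation
\[
\sum_m c_m\cos(m\pi/N)=0,
\]
in which the frequencies $m$ are signed sums of at most seven of the numbers $a,b,c$, while the number of terms and the sizes of the integers $c_m$ are \emph{absolutely bounded, independently of $N$}; only the frequencies grow.

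The heart of the matter is to show this vanishing sum of $2N$-th roots of unity has no solution beyond the heptagonal one. The cosines lie in the maximal real subfield $\mathbb{Q}(\cos(\pi/N))$ of $\mathbb{Q}(\zeta_{2N})$, of degree $\varphi(2N)/2$, and $\zeta$ is a \emph{primitive} $2N$-th root (this is where the reduction $\gcd(a,b,c)=1$ is used). I would attack the relation with the structure theory of vanishing sums of roots of unity (Mann's theorem together with the Conway--Jones refinement): a relation with boundedly many terms can only involve roots of unity whose orders have boundedly many and boundedly large prime factors, and it must assemble from a short list of primitive minimal relations. Matching the relation against the cyclotomic polynomial $\Phi_{2N}$ then forces strong congruences among $a,b,c,N$ and bounds the primes dividing $2N$, after which the narrow interval for $c/N$ eliminates the surviving large-$N$ branches, leaving a finite list of $(a,b,c,N)$ to inspect; only $(1,2,4,7)$ passes, reproducing \eqref{eq-1.2}.

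I expect this last step to be the main obstacle. The coefficients $c_m$ are fixed, but the \emph{pattern of coincidences} among the reduced frequencies $m \bmod 2N$ varies with $(a,b,c)$, so the single-looking equation is really a family of combinatorially distinct vanishing sums. The difficulty is to control all of them uniformly in $N$ — rather than merely rechecking the already-verified range $N\le 2000$ — and to rule out the possibility that accidental collisions of frequencies produce a spurious relation for some large, highly composite $N$. Reconciling the algebraic rigidity supplied by the roots-of-unity classification with the very tight geometric constraint that the obtuse angle occupies an interval of width under $2^\circ$ is where the argument must do its real work.
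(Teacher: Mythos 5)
Your reduction of the problem to a vanishing sum of roots of unity is sound as far as it goes (the inscribed-angle parametrization, the law-of-cosines form of $C_1A_1=C_1B_1$, and the boundedness of the number of terms and of the coefficients are all correct), but the proposal stops exactly where the proof has to be done, and you say so yourself: the step in which ``Mann's theorem together with the Conway--Jones refinement'' is supposed to force congruences on $(a,b,c,N)$, bound the primes dividing $2N$, and leave a finite list, is asserted rather than executed. The gap is not cosmetic. First, your expanded relation, while of absolutely bounded length, has length in the hundreds (each side is a product of up to seven sines/cosines, i.e.\ up to $2^7$ exponential terms per product), so Theorem 6 of \cite{Conway&Jones-1976}, which classifies vanishing sums of length at most nine, does not apply to it; Mann's theorem does apply, but it only bounds the primes of the orders occurring in a \emph{minimal} vanishing subsum by the length of that subsum, which for a sum of this size leaves an astronomically large case analysis. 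Second, your relation is not primitive: after reducing the frequencies modulo $2N$, terms collide, cancel, or split into separately vanishing subsums in ways that depend on $(a,b,c)$, and no prime bound on $2N$ itself follows until this decomposition into minimal relations has been carried out and controlled uniformly in $N$ --- precisely the combinatorial explosion you acknowledge but do not tame. No finite list of $(a,b,c,N)$ is produced and no terminal verification is performed, so what you have is a program, not a proof.

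For comparison, the paper avoids any classification theorem. It quotes the reduction of \cite{Netay&Savvateev-2017} to the two-variable system \eqref{eq-1.3}, i.e.\ the \emph{six-term} relation $f(x,y)=0$ with $x^N=y^N=1$ (it is this short relation, not your long one, that the Conway--Jones theorem could in principle handle, as the paper's concluding remarks note while calling that route ``quite difficult''). Instead of classifying, the paper averages \eqref{eq-2.1} over all automorphisms of $\mathbb{Q}(\zeta_N)$, turning it into the Ramanujan-sum identity \eqref{eq-2.4}; a one-line size estimate (at most five negative fractions, each of absolute value at most $1/6$ unless it lies in $V=\{-1,-1/2,-1/4\}$) forces some monomial $g_i(x,y)$ to satisfy $g_i(x,y)^l=1$ with $l\in\{2,3,5\}$, by the Lemma. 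That yields the fifteen explicit polynomial systems \eqref{eq-2.5}, which are solved by Gr\"{o}bner bases; identifying which irreducible factors are cyclotomic shows every special solution satisfies $x^{210}=y^{210}=1$, and a finite search at $N=210$ gives the full solution set $S$, from which the Conjecture follows by checking positivity of the real and imaginary parts of $x$, $y$, $xy$. If you wish to salvage your route, either carry out the primitive-sum decomposition in full for your long relation, or first pass to the six-term relation $f(x,y)=0$ and then adopt the paper's averaging trick, which is what makes the problem finite by elementary means.
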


The main aim of our paper is to prove it in the general case. In Section \ref{sec-2}, we prove Theorem from which we can very easily deduce the statement of Conjecture (see Corollary in Section \ref{sec-3}). We also find two triangles (the so-called pentadecagonal triangles) which have similar properties to the heptagonal triangle. In the Section \ref{sec-4}, we give some comments on the obtained results.

Denote by $\mathbb{N}$ the set of positive integers. For an arbitrary $N \in \mathbb{N}$, let
\[
\zeta_N=\exp{(2\pi\sqrt{-1}/N)}
\]
be the standard primitive $N$th root of unity in the field of complex numbers $\mathbb{C}$. Let
\[
f(x,y)=1+x+y+x^2y^3+x^3y^2+x^3y^3
\]
be the special symmetric polynomial in $x$, $y$ mentioned in \cite[Sec. 2]{Netay&Savvateev-2017}. According to the results obtained in \cite[Sec. 2]{Netay&Savvateev-2017}, we need to solve the system
\begin{equation}
\label{eq-1.3}
\text{$f(x,y)=0$ and $x^N=y^N=1$ for some $N \in \mathbb{N}$}
\end{equation}
over $\mathbb{C}$. Here, the unknowns complex numbers $x$ and $y$ have the following sense: their absolute values are $1$ while their arguments $\arg{x}$ and $\arg{y}$ coincide with two acute angles of a Sharygin triangle (recall that its third angle must be obtuse). Therefore, it is enough to prove that there is unique non-ordered pair $\{x,y\}$ satisfying \eqref{eq-1.3} for which real and imaginary parts of three numbers $x$, $y$, $xy$ are positive. Obviously, this pair $\{x,y\}$ must correspond to the heptagonal triangle that means $\{\arg{x},\arg{y}\}=\{\pi/7,2\pi/7\}$.

As it turns out later, a simplifying point is that the system \eqref{eq-1.3} has only two unknowns. Our idea is to construct a finite set of additional algebraic equations $h_i(x,y)=0$ such that any solution of \eqref{eq-1.3} must be a solution of one of the systems
\[
f(x,y)=0, \quad h_i(x,y)=0.
\]
Then we could just solve all such systems using a \emph{computer algebra system} (in view of \emph{Bezout's theorem} and absolute irreducibility of $f(x,y)$, we can a priori assume that each of them has only a finite set of solutions over $\mathbb{C}$).

\section{Main Result}
\label{sec-2}

\hspace{0.5cm} Let $\alpha=\zeta_3$, $\beta=\zeta_5$, $\gamma=\zeta_7$ and define the sets $S_0$, \dots, $S_3$ as follows:
\[
\begin{array}{l}
S_0=\{(1,-1),(-1,1)\},\\
S_1=\{(\alpha,\alpha),(\alpha^2,\alpha),(\alpha,\alpha^2),(\alpha^2,\alpha^2),\\
\hphantom{S_1=\{}(\alpha,-\alpha),(-\alpha,\alpha),(\alpha^2,-\alpha^2),(-\alpha^2,\alpha^2)\},\\
S_2=\{(\beta\alpha,\beta\alpha^2),(\beta^2\alpha,\beta^2\alpha^2),(\beta^3\alpha,\beta^3\alpha^2),(\beta^4\alpha,\beta^4\alpha^2),\\
\hphantom{S_2=\{}(\beta\alpha^2,\beta\alpha),(\beta^2\alpha^2,\beta^2\alpha),(\beta^3\alpha^2,\beta^3\alpha),(\beta^4\alpha^2,\beta^4\alpha)\},\\
S_3=\{(\gamma,-\gamma^4),(\gamma^2,-\gamma),(\gamma^3,-\gamma^5),(\gamma^4,-\gamma^2),(\gamma^5,-\gamma^6),(\gamma^6,-\gamma^3),\\
\hphantom{S_2=\{}(-\gamma,\gamma^2),(-\gamma^2,\gamma^4),(-\gamma^3,\gamma^6),(-\gamma^4,\gamma),(-\gamma^5,\gamma^3),(-\gamma^6,\gamma^5)\}.
\end{array}
\]
Also, let $S$ be the union of $S_0$, \dots, $S_3$ so that $|S|=30$.

\begin{thm*}
The set of all solutions $(x,y)$ of the system \eqref{eq-1.3} over $\mathbb{C}$ coincides with $S$.
\end{thm*}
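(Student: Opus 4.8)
The plan is to regard the equation $f(x,y)=0$, evaluated at roots of unity $x,y$, as a \emph{vanishing sum of six roots of unity}: each of the monomials
\[
m_1=1,\ m_2=x,\ m_3=y,\ m_4=x^2y^3,\ m_5=x^3y^2,\ m_6=x^3y^3
\]
is itself a root of unity, and $m_1+\dots+m_6=0$. The structural fact I would rely on is the classical theory of such sums (Mann's theorem, refined by Conway and Jones): any vanishing sum of roots of unity splits into \emph{minimal} vanishing subsums — subsums with no vanishing proper part — and in a minimal subsum of length $n$ the ratio of any two of its terms is a root of unity whose order divides $\prod_{p\le n}p$. Since our sum has six terms, every minimal subsum has length $n\le 6$, so every such ratio has order dividing $\prod_{p\le 6}p=2\cdot3\cdot5=30$. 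This already explains, a posteriori, why $7$th roots enter only through $S_3$: there the six monomials split into three opposite pairs $m_a+m_b=0$, whose internal ratios are $-1$, while the dependence on $\zeta_7$ sits \emph{between} the pairs and is invisible to Mann's bound.

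First I would extract the auxiliary equations $h_i$ from this observation. Because each minimal subsum contains at least two terms, for every solution there is a pair of indices $a\ne b$ such that $m_a/m_b$ is a $30$th root of unity. Every such ratio is a Laurent monomial $x^{p}y^{q}$, with $(p,q)$ a difference of two of the exponent vectors $(0,0),(1,0),(0,1),(2,3),(3,2),(3,3)$. Hence for \emph{every} solution of \eqref{eq-1.3} there is such a $(p,q)$ and some $\zeta\in\mu_{30}$ with
\[
h_{(p,q),\zeta}(x,y):=x^{p}y^{q}-\zeta=0 .
\]
Ranging over the $\binom{6}{2}=15$ pairs of monomials and the $30$ values $\zeta\in\mu_{30}$ produces the desired finite family of equations, and any solution of \eqref{eq-1.3} satisfies $f=0$ together with at least one $h_{(p,q),\zeta}=0$.

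It then remains to solve each pair $f(x,y)=0,\ x^{p}y^{q}-\zeta=0$. Since $f$ is absolutely irreducible and does not divide any binomial $x^{p}y^{q}-\zeta$, the curves $f=0$ and $x^{p}y^{q}=\zeta$ share no common component, so by Bezout's theorem each system has only finitely many solutions, which a computer algebra system finds explicitly (for instance by eliminating one variable via a resultant and factoring the resulting univariate polynomial). I would then discard the solutions that are not roots of unity, take the union of what survives over all $(p,q)$ and $\zeta$, and verify that it equals $S$; conversely, a direct substitution confirms that each of the $30$ pairs in $S$ satisfies $f=0$ and consists of roots of unity, hence is a genuine solution.

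The main obstacle is twofold. Mathematically, the delicate point is the reduction: one must be certain that Mann's bound is applied to an actual minimal subsum and that the conclusion ``some two terms have ratio in $\mu_{30}$'' holds for \emph{every} solution regardless of which partition into minimal subsums occurs — which is why all $15$ pairs must be retained rather than guessing the partition in advance. Computationally, the obstacle is bookkeeping: there are on the order of $15\times30$ systems to solve and sift, and the verification must be organized so that no solution is lost and all four families $S_0,\dots,S_3$ are recovered, in particular the heptagonal family $S_3$ and the pentadecagonal family $S_2$, whose presence shows that the bound $30$ cannot be lowered.
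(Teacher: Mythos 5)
Your proposal is correct, but it takes precisely the route the paper deliberately avoids, so a comparison is worthwhile. You obtain the finite family of auxiliary equations from Mann's theorem in the Conway--Jones framework \cite{Conway&Jones-1976}: the six monomials form a vanishing sum, every minimal vanishing subsum has internal ratios in $\mu_{30}$, hence some ratio $m_a/m_b=x^py^q$ equals some $\zeta\in\mu_{30}$. Two remarks on your version: since the term $m_1=1$ itself lies in some minimal subsum, you may always take $a=1$, so $5\times 30$ systems --- or just five systems $g_i(x,y)^{30}=1$, which have rational coefficients --- suffice instead of your $15\times 30$ systems with coefficients in $\mathbb{Q}(\zeta_{30})$; and your closing claim that the families $S_2$, $S_3$ show ``the bound $30$ cannot be lowered'' is not right: in every actual solution the minimal subsums have length $2$ or $3$ (opposite pairs, or translates of $1+\alpha+\alpha^2=0$), so a sieve over $\mu_6$ would a posteriori have caught everything, and $30$ is needed only a priori, before one knows the partitions. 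The paper instead reaches a sharper reduction by an elementary, self-contained argument: applying all automorphisms $\zeta_N\mapsto\zeta_N^j$ of $\mathbb{Q}(\zeta_N)$ to \eqref{eq-2.1} and summing yields the Ramanujan-sum identity \eqref{eq-2.4}; a size estimate forces some fraction $\mu(M_i)/\varphi(M_i)$ into $V=\{-1,-1/2,-1/4\}$, and the Lemma converts this into $g_i(x,y)^l=1$ with $l\in\{2,3,5\}$, giving the fifteen rational systems \eqref{eq-2.5}. What each approach buys: yours is conceptually transparent and explains the shape of the solution families, but it rests on Mann's theorem, a genuinely nontrivial black box --- exactly the alternative that items 3 and 4 of Section \ref{sec-4} mention and set aside as harder; the paper's needs only textbook facts about $\varphi$, $\mu$ and Ramanujan sums, and its smaller exponents $l\in\{2,3,5\}$ make the Gr\"{o}bner-basis and factorization work over $\mathbb{Q}$ lighter, which is also why the paper's Remark rejects the single-exponent $l=30$ variant (essentially your grouped systems) as technically heavier.
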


Denote by $V$ the set $\{-1,-1/2,-1/4\}$. Let $\varphi$ and $\mu$ be the \emph{Euler totient function} and the \emph{M\"{o}bius function}, respectively (the standard properties of these functions can be found, for example, in \cite[\S A.5, \S A.6]{Nathanson-1996}). First we prove the following auxiliary result.

\begin{lem*}
For any $N \in \mathbb{N}$ and $k \in \mathbb{Z}$, if $M=N/(k,N)$ and $\mu(M)/\varphi(M) \in V$ then there is $t \in \mathbb{Z}$ such that $k=tN/2$ or $k=tN/3$ or $k=tN/5$. Here, $(k,N)$ denotes $\gcd{(k,N)}$.
\end{lem*}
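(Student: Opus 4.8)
The plan is to reduce the hypothesis on $\mu(M)/\varphi(M)$ to a short explicit list of admissible values of $M$, and then translate the relation $M=N/(k,N)$ back into a statement about $k$. First I would note that every element of $V$ is negative, so the hypothesis $\mu(M)/\varphi(M)\in V$ forces $\mu(M)<0$; since $\mu$ takes only the values $0,\pm 1$ and $\varphi(M)>0$, this means $\mu(M)=-1$. In particular $M$ is squarefree with an odd number of distinct prime divisors. Writing the three elements of $V$ as $-1/1$, $-1/2$, $-1/4$, the equation $\mu(M)/\varphi(M)=-1/d$ then becomes simply $\varphi(M)=d$, so that $\varphi(M)\in\{1,2,4\}$.

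Next I would bound the number of prime factors of $M$. If $M=p_1\cdots p_r$ with distinct primes $p_i$, then $\varphi(M)=\prod_{i=1}^r(p_i-1)$, and for $r\geqslant 3$ the smallest possible value of this product is $\varphi(2\cdot 3\cdot 5)=1\cdot 2\cdot 4=8>4$. Hence $r<3$, and since $r$ is odd we must have $r=1$, i.e. $M=p$ is prime. The remaining condition $\varphi(p)=p-1\in\{1,2,4\}$ then leaves exactly $p\in\{2,3,5\}$ (the edge case $M=1$ is automatically excluded, since $\mu(1)=1\neq -1$).

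Finally I would translate the conclusion back to $k$. By definition $M=N/(k,N)$, so $M\in\{2,3,5\}$ is equivalent to $(k,N)=N/M$ with $M\in\{2,3,5\}$; in particular $N/M$ divides $k$. Putting $t=k/(N/M)=Mk/N\in\mathbb{Z}$ gives $k=tN/M$, which is precisely one of $k=tN/2$, $k=tN/3$, $k=tN/5$, as required.

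I do not expect any genuine obstacle here: the argument is a finite case analysis, and the only points requiring a little care are ruling out $\mu(M)=0$ (non-squarefree $M$) and $\mu(M)=+1$ (evenly many prime factors, e.g. $M=6$ or $M=10$, which satisfy $\varphi(M)\in\{2,4\}$ but are killed by the sign of $\mu$), together with the observation that three distinct prime factors already force $\varphi(M)\geqslant 8$.
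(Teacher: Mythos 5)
Your proof is correct and takes essentially the same route as the paper's: both reduce the hypothesis to $\mu(M)=-1$ together with $\varphi(M)\in\{1,2,4\}$, pin down $M\in\{2,3,5\}$, and then read off $k=tN/M$ from $(k,N)=N/M$. The only cosmetic difference is that you force $M$ to be prime structurally (squarefreeness plus the bound $\varphi(M)\geqslant 8$ for three or more prime factors), whereas the paper simply enumerates the preimages $\varphi^{-1}(1)=\{1,2\}$, $\varphi^{-1}(2)=\{3,4,6\}$, $\varphi^{-1}(4)=\{5,8,10,12\}$ and discards the values with $\mu(M)\neq -1$.
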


\begin{proof}[\indent \textsc{Proof}]
We have to consider three cases with respect to the special values of $\mu(M)/\varphi(M)$ represented in $V$.

(a) If $\mu(M)/\varphi(M)=-1$ then $\mu(M)=-1$ and $\varphi(M)=1$. Therefore, $M=2$, $(k,N)=N/2$ and $k=tN/2$ for some $t \in \mathbb{Z}$.

(b) If $\mu(M)/\varphi(M)=-1/2$ then $\mu(M)=-1$ and $\varphi(M)=2$. This implies $M=3$ (indeed, if $\varphi(M)=2$ then $M \in \{3,4,6\}$ and only $M=3$ is suitable). Therefore, $(k,N)=N/3$ and we get $k=tN/3$ for some $t \in \mathbb{Z}$.

(c) If $\mu(M)/\varphi(M)=-1/4$ then $\mu(M)=-1$ and $\varphi(M)=4$. Here, we have $M=5$ (indeed, if $\varphi(M)=4$ then $M \in \{5,8,10,12\}$ and only $M=5$ is suitable). Therefore, $(k,N)=N/5$ and $k=tN/5$ fore some $t \in \mathbb{Z}$.

The proof is completed.
\end{proof}

We can now proceed to prove the main result of the paper.

\begin{proof}[\indent \textsc{Proof of Theorem}]
We can set $x=\zeta_N^a$, $y=\zeta_N^b$ for some integers $a$, $b$ and $N \in \mathbb{N}$. Then
\begin{equation}
\label{eq-2.1}
1+\zeta_N^a+\zeta_N^b+\zeta_N^{2a+3b}+\zeta_N^{3a+2b}+\zeta_N^{3a+3b}=0.
\end{equation}
Clearly, all summands in the left hand side \eqref{eq-2.1} are elements of the \emph{cyclotomic field} $\mathbb{Q}(\zeta_N)$. Let us recall some facts on cyclotomic fields (see, for instance, \cite[Ch. 2]{Washington-1997}). The \emph{automorphisms} of $\mathbb{Q}(\zeta_N)$ are exactly the maps defined by the rule $\zeta_N \mapsto \zeta_N^j$ where $j$ is a residue modulo $N$ with $\gcd{(j,N)}=1$. Denote by $R_N^*$ the set of all such residues $j$. Then we have
\[
[\mathbb{Q}(\zeta_N):\mathbb{Q}]=|R_N^*|=\varphi(N).
\]
This is a direct consequence of the fact that the so-called \emph{cyclotomic polynomial}
\[
\Phi_N(x)=\prod_{j \in R_N^*}(x-\zeta_N^j)
\]
is irreducible over $\mathbb{Q}$ (it has rational and even integer coefficients) and $\deg{\Phi_N(x)}=\varphi(N)$.

It follows from \eqref{eq-2.1} that
\[
1+\zeta_N^{aj}+\zeta_N^{bj}+\zeta_N^{(2a+3b)j}+\zeta_N^{(3a+2b)j}+\zeta_N^{(3a+3b)j}=0
\]
for each $j \in R_N^*$. After summation over $j \in R_N^*$, we obtain
\begin{equation}
\label{eq-2.2}
\varphi(N)+c_N(a)+c_N(b)+c_N(2a+3b)+c_N(3a+2b)+c_N(3a+3b)=0,
\end{equation}
where $c_N$ is the \emph{Ramanujan sum} defined by
\[
c_N(k)=\sum_{j \in R_N^*}\zeta_N^{kj}.
\]
It is a well known fact that
\begin{equation}
\label{eq-2.3}
c_N(k)=\frac{\varphi(N)\mu(N/(k,N))}{\varphi(N/(k,N))}
\end{equation}
(see, for example, \cite[\S A.7]{Nathanson-1996}).

For a convenient notation, let
\[
k_1=a, \quad k_2=b, \quad k_3=2a+3b, \quad k_4=3a+2b, \quad k_5=3a+3b
\]
so that \eqref{eq-2.1} and \eqref{eq-2.2} become
\[
1+\sum_{i=1}^5\zeta_N^{k_i}=0, \quad \varphi(N)+\sum_{i=1}^5c_N(k_i)=0,
\]
respectively. Applying \eqref{eq-2.3} and reducing by $\varphi(N)$, rewrite the last equality as
\begin{equation}
\label{eq-2.4}
1+\sum_{i=1}^5\frac{\mu(M_i)}{\varphi(M_i)}=0
\end{equation}
where $M_i=N/(k_i,N)$ for $i=1,\dots,5$.

Note that, for every $M \in \mathbb{N}$, a fraction of the shape $\mu(M)/\varphi(M)$ must be equal to $0$, $\pm 1$, or $\pm 1/m$ where $m \in \mathbb{N}$ is even. Based on Lemma, one can deduce the following: if one of the fractions $\mu(M_i)/\varphi(M_i)$ belongs to the set $V$ then the corresponding $k_i$ can be represented as $k_i=tN/2$ or $k_i=tN/3$ or $k_i=tN/5$ with some integer $t$. It is important to remark that the condition
\[
\text{``one of the fractions $\mu(M_i)/\varphi(M_i)$ is an element of $V$''}
\]
must be fulfilled. Indeed, otherwise there are at most five negative fractions $\mu(M_i)/\varphi(M_i)$ and each of them in absolute value does not exceed $1/6$; in this case, the equality \eqref{eq-2.4} cannot be true.

Let us consider all indicated cases for $k_i$'s. Introduce the monomials
\[
g_1(x,y)=x, \quad g_2(x,y)=y, \quad g_3(x,y)=x^2y^3, \quad g_4(x,y)=x^3y^2, \quad g_5(x,y)=x^3y^3
\]
and note that for $x=\zeta_N^a$ and $y=\zeta_N^b$ the value of $g_i(x,y)$ is $\zeta_N^{k_i}$ ($i=1,\dots,5$). If $k_i=tN/2$ then $\zeta_N^{2k_i}=1$. This means that we obtain an additional equation $g_i(x,y)^2=1$ for some $i$. Likewise, in two remaining cases $k_i=tN/3$ and $k_i=tN/5$ we get an additional equation $g_i(x,y)^3=1$ and $g_i(x,y)^5=1$ with some $i$, respectively.

Thus, it just remains to solve over $\mathbb{C}$ fifteen systems of algebraic equations of the form
\begin{equation}
\label{eq-2.5}
f(x,y)=0, \quad g_i(x,y)^l=1
\end{equation}
where $i=1,\dots,5$ and $l \in \{2,3,5\}$. More precisely, we have to determine those solutions $(x,y)$ for which there is $N \in \mathbb{N}$ such that $x^N=y^N=1$ (let us call such $(x,y)$ the \emph{special solutions}). A priori, $N$ may depend on $(x,y)$ but below we show that one can take $N=210$ for any special solution $(x,y)$.

For this purpose, we can proceed different well-known techniques. We prefer \emph{Gr\"{o}bner bases}, but we could just as well use \emph{resultants} (regarding these notions, we refer the reader to any book on algebra or computer algebra,  for example \cite{Gathen&Gerhard-2013}). For computing with polynomials, we plan use a computer algebra system (for instance, Maple CAS \cite{maple}).

For each of the systems \eqref{eq-2.5}, we compute Gr\"{o}bner basis $G$ with respect to \emph{pure lexicographic order} $y \succ x$. The first polynomial in $G$ depends only on $x$ and has rational coefficients since both $f(x,y)$ and $g_i(x,y)$ are in $\mathbb{Q}[x,y]$. Denote this polynomial by $F(x)$. Further, we factorize $F(x)$ over $\mathbb{Q}$ and find out whether one of its irreducible factors $p(x)$ coincides with some cyclotomic polynomial $\Phi_n(x)$.

Eventually, after factorization of all such polynomials $F(x)$ for all systems \eqref{eq-2.5}, we obtain a finite set $K$ of pairwise distinct monic irreducible factors $p(x)$ so that if $p(x) \in K$ then
\[
d=\deg{p(x)} \in D=\{1,2,4,6,8,12,16,28\}.
\]
As an example, represent an irreducible factor of degree $d=12$ received for the system \eqref{eq-2.5} with $i=2$ and $l=5$:
\[
p_{12}(x)=x^{12}+2x^{11}+6x^{10}+5x^9-8x^7-11x^6-8x^5+5x^3+6x^2+2x+1.
\]
If $p_{12}(x)=\Phi_n(x)$ for some $n$ then $\deg{\Phi_n(x)}=\varphi(n)=12$ and $n \in \{13,21,26,28,36,42\}$. But, for such $n$, we actually have $p_{12}(x) \neq \Phi_n(x)$.

In general, for all $p(x) \in K$, a possible coincidence $p(x)=\Phi_n(x)$ for some $n$ implies
\begin{gather*}
n \in I_1=\\
\{\text{$1$ through $10$},12,13,14,15,16,17,18,20,21,24,26,28,29,30,32,34,36,40,42,48,58,60\}.
\end{gather*}
Let $L=\{\Phi_n(x):n \in I_1\}$. In fact, we obtain $K \cap L=\{\Phi_n(x):n \in I_2\}$ where
\[
I_2=\{1,2,3,5,6,7,14,15\} \subset I_1.
\]
Since $\lcm{(I_2)}=210$, we arrive at $x^{210}=1$ for any special solution $(x,y)$. We also have $y^{210}=1$ due to a symmetry.

We can now determine all solutions $(x,y)=(\zeta_N^a,\zeta_N^b)$ of the system \eqref{eq-1.3} just letting $N=210$ and using brute force search for $(a,b)$. As a result, we obtain the following set $P$ of pairs $(a,b)$: two pairs
\[
(70,70),\;(140,140)
\]
with $a=b$ and twenty eight pairs with $a \neq b$ which are
\[
\begin{array}{l}
(0,105),\;(14,154),\;(15,30),\;(28,98),\;(35,140),\;(45,90),\;(56,196),\;(60,135),\;(70,140),\\
(70,175),\;(75,150),\;(112,182),\;(120,165),\;(180,195)
\end{array}
\]
together with symmetric ones. Thus, the set of all solutions of the system \eqref{eq-1.3} is
\begin{equation}
\label{eq-2.6}
\{(\zeta_{210}^a,\zeta_{210}^b):(a,b) \in P\}.
\end{equation}
One can verify that \eqref{eq-2.6} is exactly the set $S$ defining before. This completes the proof.
\end{proof}

\begin{rem*}
Surely, in the final part of the proof, we could solve only five systems \eqref{eq-2.5} with a single value $l=\lcm{(2,3,5)}=30$. However, this would make the corresponding sets $D$, $I_1$, $I_2$ longer. Moreover, it would be more difficult to compute Gr\"{o}bner base $G$ for those monomials $g_i(x,y)$ for which $\deg{(g_i(x,y)^{30}-1)}$ admits large values (in this case, we would have to compute some resultants). Anyway, it's clear that this part of the proof is purely technical.
\end{rem*}

\section{Proof of a Conjecture}
\label{sec-3}

\hspace{0.5cm} Based on Theorem, we can now prove Conjecture.

\begin{cor*}
The statement of Conjecture is true.
\end{cor*}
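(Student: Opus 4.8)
The plan is to read the Conjecture through the reduction set up in Section~\ref{sec-1}: a Sharygin triangle whose three vertices lie among the vertices of a regular polygon produces, via the construction of \cite{Netay&Savvateev-2017}, a solution $(x,y)$ of the system \eqref{eq-1.3} whose two acute angles are $\arg x$ and $\arg y$ and whose third (obtuse) angle is $\pi-\arg x-\arg y$. These three geometric requirements --- two acute angles summing to less than $\pi/2$ --- are exactly the condition that $x$, $y$, and $xy$ all have positive real and imaginary parts, i.e. that all three lie in the open first quadrant of $\mathbb{C}$. Hence proving the Conjecture amounts to showing that among the solutions of \eqref{eq-1.3} there is a unique non-ordered pair $\{x,y\}$ lying (together with $xy$) in the open first quadrant, and that it corresponds to $\{\arg x,\arg y\}=\{\pi/7,2\pi/7\}$.

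By Theorem the full solution set is the explicit $30$-element set $S=S_0\cup S_1\cup S_2\cup S_3$, so the entire task collapses to a finite, mechanical filtering: for each listed pair I would compute $\arg x$ and $\arg y$ (most conveniently as fractions of $2\pi$), then discard the pair as soon as $x$, $y$, or $xy$ falls outside the open first quadrant. Organizing the check by the four subsets makes the disqualifications immediate. In $S_0$ both coordinates are real, so their imaginary parts vanish. In $S_1$ the coordinates are $\pm\alpha^{\pm1}$ with $\alpha=\zeta_3$; only $-\alpha^2$ (argument $\pi/3$) sits in the first quadrant, but in each of the two pairs containing it the partner is $\alpha^2$, which does not. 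In $S_2$ the coordinates are $15$th roots of unity, and a short computation of the eight arguments shows that the only first-quadrant entries are $\beta^4\alpha$ and $\beta^2\alpha^2$; in the pairs where these occur the partner always has an obtuse or reflex argument. Thus $S_0$, $S_1$, $S_2$ contribute nothing.

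It remains to scan the twelve pairs of $S_3$, built from $\gamma=\zeta_7$. Here exactly one pair survives up to order: $(\gamma,-\gamma^4)$ and its swap $(-\gamma^4,\gamma)$ both keep $x$, $y$, and $xy$ in the open first quadrant, since $\arg\gamma=2\pi/7$, $\arg(-\gamma^4)=\pi/7$, and $\arg(\gamma\cdot(-\gamma^4))=\arg(-\gamma^5)=3\pi/7$ are all strictly between $0$ and $\pi/2$, whereas every other pair of $S_3$ already fails on $x$ or on $y$. These two ordered pairs are the same non-ordered pair $\{\gamma,-\gamma^4\}$, for which $\{\arg x,\arg y\}=\{\pi/7,2\pi/7\}$ --- precisely the heptagonal triangle of \eqref{eq-1.1}. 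Since similar triangles share their angles, every Sharygin triangle inscribed in a regular polygon is similar to the heptagonal one, which is the Conjecture.

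The only real obstacle is bookkeeping rather than ideas: one must reduce the thirty arguments modulo $2\pi$ without error and apply the three positivity conditions (on $x$, on $y$, and on $xy$) consistently. Once Theorem supplies the finite list $S$, no further structural argument is needed, and this is exactly why the statement is stated as a corollary.
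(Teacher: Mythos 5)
Your proposal is correct and follows exactly the paper's argument: invoke Theorem to reduce the Conjecture to filtering the thirty explicit solutions in $S$ by the condition that $x$, $y$, and $xy$ all lie in the open first quadrant, and observe that only the non-ordered pair $\{\gamma,-\gamma^4\}$ (with $\arg\gamma=2\pi/7$, $\arg(-\gamma^4)=\pi/7$) survives, which is the heptagonal triangle. The paper compresses the case check into ``a direct checking shows,'' while you carry it out subset by subset, but the content is identical.
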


\begin{proof}[\indent \textsc{Proof}]
We have only to highlight those solutions $(x,y) \in S$ for which real and imaginary parts of three numbers $x$, $y$, and $xy$ are positive. A direct checking shows us that this condition is satisfied for only one (up to a symmetry) solution
\begin{equation}
\label{eq-3.1}
(x,y)=(\gamma,-\gamma^4)
\end{equation}
which lies in $S_3$. Here, we have $\arg{x}=2\pi/7$ and $\arg{y}=\pi/7$. As noted before, this leads us to the heptagonal triangle.
\end{proof}

Meanwhile, the following question remains: do the other solutions $(x,y)$ from $S$ make any geometrical sense? It's clear that only the solutions $(x,y) \in S_2$ can be of interest.

Below, we show how the solution
\begin{equation}
\label{eq-3.2}
(x,y)=(\beta\alpha,\beta\alpha^2)
\end{equation}
might be interpreted geometrically. For this purpose, we interpret the Euclidean plan as the \emph{complex plan} $\mathbb{C}$. Let us define a triangle $ABC$ by the tangency points
\[
T_a=z_1, \quad T_b=z_2, \quad T_3=z_3
\]
of the \emph{inscribed circle} with the sides $BC$, $CA$, $AB$, respectively. Here, we assume the parameters $z_i$'s to be complex numbers with absolute values $1$ so that
\[
A=\frac{2z_2z_3}{z_2+z_3}, \quad B=\frac{2z_1z_3}{z_1+z_3}, \quad C=\frac{2z_1z_2}{z_1+z_2}
\]
(see \cite{Osipov-2014} for more details about such a parametrization of triangles on Euclidean plan). Due to geometrical reasons, the triangle $T_aT_bT_c$ must be acute-angled. Then we have
\[
A_1=\frac{2z_1z_2z_3}{z_1^2+z_2z_3}, \quad B_1=\frac{2z_1z_2z_3}{z_2^2+z_3z_1}, \quad C_1=\frac{2z_1z_2z_3}{z_3^2+z_1z_2}.
\]
One can verify that if the triangle $ABC$ is not isosceles then the condition \eqref{eq-1.2} is equivalent to the following equation:
\[
z_1^3z_2+z_2^3z_1-z_1^3z_3-z_2^3z_3+z_1^2z_3^2+z_2^2z_3^2=0.
\]
Letting here $z_3=-1$, $z_1=x$, $z_2=y^{-1}$, we get exactly our equation
\[
f(x,y)=0.
\]

For the solution \eqref{eq-3.1}, three points
\[
T_a=x, \quad T_b=y^{-1}, \quad T_c=-1
\]
actually form an acute-angled triangle. Moreover, a direct checking shows that the corresponding non-isosceles triangle $ABC$ whose vertices are
\[
A=-\frac{2}{1-y}, \quad B=\frac{2x}{1-x}, \quad C=\frac{2x}{1+xy}
\]
is exactly the heptagonal triangle with angles \eqref{eq-1.1}.

On the contrary, for the solution \eqref{eq-3.2}, the triangle $T_aT_bT_c$ is not acute-angled. Nevertheless, the corresponding triangle $ABC$ exists and its angles are
\begin{equation}
\label{eq-3.3}
\angle A=11\pi/15, \quad \angle B=\pi/15, \quad \angle C=\pi/5.
\end{equation}
We need to interpret correctly (from a geometrical point of view) the equality \eqref{eq-1.2}.

For an arbitrary non-isosceles triangle $ABC$, let $A_2$, $B_2$, and $C_2$ be the intersection points of the \emph{external bisectors} with opposite sides. For the triangle $ABC$ generated by the solution \eqref{eq-3.2}, the (formally) internal bisectors $AA_1$ and $BB_1$ are actually its external bisectors $AA_2$ and $BB_2$, respectively. Thus, for the triangle $ABC$ with angles \eqref{eq-3.3}, the equality \eqref{eq-1.2} must be interpreted as
\[
C_1A_2=C_1B_2.
\]
This yields a desired geometrical interpretation of \eqref{eq-3.2}.

Similarly, the solution
\begin{equation}
\label{eq-3.4}
(x,y)=(\beta^2\alpha,\beta^2\alpha^2)
\end{equation}
brings us another non-isosceles triangle $ABC$ with angles
\begin{equation}
\label{eq-3.5}
\angle A=2\pi/15, \quad \angle B=7\pi/15, \quad \angle C=6\pi/15
\end{equation}
but now the correct way to interpret \eqref{eq-1.2} is
\[
C_2A_1=C_2B_2.
\]

For a triangle $ABC$, the corresponding triangles $A_1B_2C_2$, $A_2B_1C_2$, and $A_2B_2C_1$ also will be called the bisectral triangles. As the vertices of the triangles with angles \eqref{eq-3.3} and \eqref{eq-3.5} can be placed in suitable vertices of a regular pentadecagon, we call them the \emph{pentadecagonal triangles} (the \emph{first} and the \emph{second}, respectively).

\section{Concluding Remarks}
\label{sec-4}

\hspace{0.5cm} Let us comment on our results and discuss some other ways of proving Theorem.

\textbf{1.} How diverse can equalities \eqref{eq-2.4} be if we go through all thirty solutions \eqref{eq-2.6}? A direct computation shows that the complete list of such equalities is the following:
\[
\begin{array}{l}
1+1-1-1+1-1=0,\vspace{2mm}\\
1-1+1+1-1-1=0
\end{array}
\]
(without fractions, only $\pm 1$) and
\[
\begin{array}{l}
1+\dfrac{1}{2}-\dfrac{1}{2}-\dfrac{1}{2}+\dfrac{1}{2}-1=0,\vspace{2mm}\\
1-\dfrac{1}{2}+\dfrac{1}{2}+\dfrac{1}{2}-\dfrac{1}{2}-1=0,\vspace{2mm}\\
1+\dfrac{1}{6}-\dfrac{1}{6}-\dfrac{1}{6}-1+\dfrac{1}{6}=0,\vspace{2mm}\\ 1-\dfrac{1}{6}+\dfrac{1}{6}-1-\dfrac{1}{6}+\dfrac{1}{6}=0,\vspace{2mm}\\
1-\dfrac{1}{2}-\dfrac{1}{2}-\dfrac{1}{2}-\dfrac{1}{2}+1=0,\vspace{2mm}\\ 1+\dfrac{1}{8}+\dfrac{1}{8}-\dfrac{1}{2}-\dfrac{1}{2}-\dfrac{1}{4}=0.
\end{array}
\]
One can see that all numbers from the set $V=\{-1,-1/2,-1/4\}$ are actually represented here. Meanwhile, there are other equalities which could to be \eqref{eq-2.4}, namely
\begin{equation}
\label{eq-4.1}
\begin{array}{l}
1+0-\dfrac{1}{4}-\dfrac{1}{4}-\dfrac{1}{4}-\dfrac{1}{4}=0,\vspace{2mm}\\
1-\dfrac{1}{4}-\dfrac{1}{4}-\dfrac{1}{4}-\dfrac{1}{6}-\dfrac{1}{12}=0,\vspace{2mm}\\
1-\dfrac{1}{4}-\dfrac{1}{4}-\dfrac{1}{4}-\dfrac{1}{8}-\dfrac{1}{8}=0,\vspace{2mm}\\
1-\dfrac{1}{4}-\dfrac{1}{4}-\dfrac{1}{6}-\dfrac{1}{6}-\dfrac{1}{6}=0.
\end{array}
\end{equation}
Since the system \eqref{eq-1.3} has only two unknowns $x$ and $y$ (and, consequently, there is a simpler way which was chosen by us), we do not need to study all of them.

\textbf{2.} Among the components of all pairs $(x,y) \in S$ there are all primitive $n$th roots of unity whose order $n \in I_2$ except $n=5$. For instance, we have
\[
-\alpha^2=\zeta_6, \quad -\gamma^4=\zeta_{14}, \quad \beta^2\alpha^2=\zeta_{15}.
\]
In order to clarify $I_2$, we can modify the proof of Theorem as follows. Let $V'=\{-1,-1/2\}$. If the condition
\begin{equation}
\label{eq-4.2}
\text{``one of the fractions $\mu(M_i)/\varphi(M_i)$ is an element of $V'$''}
\end{equation}
is satisfied then we solve ten systems \eqref{eq-2.5} with $i=1,\dots,5$ and $l \in \{2,3\}$ as described above. In this case, the corresponding sets $D$, $I_1$, $I_2$ will be shorter, namely
\[
\begin{array}{l}
D=\{1,2,6,8,12\},\\
I_1=\{1,2,3,4,6,7,9,13,14,15,16,18,20,21,24,26,28,30,36,42\},\\
I_2=\{1,2,3,6,7,14,15\}.
\end{array}
\]
Solving all such systems, we arrive at the same set $S$ of solutions $(x,y)$ as earlier. But we have to consider the non-empty case when \eqref{eq-4.2} is not satisfied. In this case, the equality \eqref{eq-2.4} must be one of the equalities \eqref{eq-4.1}. Next, we can, for example, solve twenty systems of the form
\[
f(x,y)=0, \quad g_{i_1}(x,y)^l=g_{i_2}(x,y)^l=g_{i_3}(x,y)^l=1
\]
with $1 \leqslant i_1<i_2<i_3 \leqslant 5$ and $l \in \{5,7\}$ (see Lemma which needs to be supplemented with the following: if $\mu(M)/\varphi(M)=-1/6$ then $k=tN/7$ where $t \in \mathbb{Z}$). Finally, it occurs that all these systems have no solutions $(x,y)$.

\textbf{3.} Any pair $(x,y) \in S$ yields a certain identity of the form
\begin{equation}
\label{eq-4.3}
1+\sum_{i=1}^5\xi_i=0
\end{equation}
where $\xi_1=x$, $\xi_2=y$, $\xi_3=x^2y^3$, $\xi_4=x^3y^2$, $\xi_5=x^3y^3$ are some roots of unity. Thus, we have some \emph{vanishing sums} of roots of unity which was studied in \cite{Conway&Jones-1976} systematically. In our case, the equalities \eqref{eq-4.3} can be of the following form:
\[
(1-1)+(1-1)+(1-1)=0
\]
in the case $(x,y) \in S_0$,
\[
(1-1)+(\alpha-\alpha)+(\alpha^2-\alpha^2)=0, \quad
(1+\alpha+\alpha^2)+(1+\alpha+\alpha^2)=0
\]
in the case $(x,y) \in S_1$,
\[
(1+\alpha+\alpha^2)+\beta^l(1+\alpha+\alpha^2)=0 \quad (l=1,\dots,4)
\]
in the case $(x,y) \in S_2$, and
\[
(1-1)+(\gamma^l-\gamma^l)+(\gamma^{\sigma(l)}-\gamma^{\sigma(l)})=0 \quad (l=1,\dots,6)
\]
in the case $(x,y) \in S_3$, where $\sigma$ is the permutation $(142)(356)$. This corresponds to Theorem 6 from \cite{Conway&Jones-1976} which provides a description of all \emph{non-empty} vanishing sums of roots of unity of length at most nine. Additionally, Theorem 6 suggests an alternative way to prove our Theorem, but the proof of Theorem 6 itself is quite difficult.

\textbf{4.} The method proposed in the proof of Theorem is more elementary and simpler compared to one from \cite{Conway&Jones-1976}. It can be applied to other \emph{trigonometric diophantine equations} as the \emph{Gordan equation} \cite{Conway&Jones-1976,Gordan-1877}. This equation in our notation can be written as
\[
2+x+x^{-1}+y+y^{-1}+z+z^{-1}=0.
\]
Here, we have three unknowns $x$, $y$, $z$ satisfying the additional condition
\[
\text{$x^N=y^N=z^N=1$ for some $N \in \mathbb{N}$}.
\]
Also, the method works for a similar equation
\[
1+x+x^{-1}+y+y^{-1}+z+z^{-1}=0
\]
(see \cite{Osipov&Pazii-2024}) and for the \emph{Coxeter} (or \emph{Crosby}) \emph{equation} \cite{Conway&Jones-1976,Crosby-1946}
\[
x+x^{-1}+y+y^{-1}+z+z^{-1}=0
\]
which must be rewritten previously as
\[
1+x^2+xy+xy^{-1}+xz+xz^{-1}=0
\]

\textbf{5.} As it turns out, the second pentadecagonal triangle is \emph{algebraically conjugate} to the first one that means the following. Letting $\delta=\zeta_{15}$, rewrite the solutions \eqref{eq-3.2} and \eqref{eq-3.4} as $(\delta^8,\delta^{13})$ and $(\delta^{11},\delta)$, respectively. Then the automorphism of the field $\mathbb{Q}(\delta)$ defined by $\delta \mapsto \delta^7$ sends the first pair $(\delta^8,\delta^{13})$ to the second pair $(\delta^{11},\delta)$. However, this idea of ``algebraic replication'' gives us at most two triangles that are geometrically distinct.

What happens if we apply such an ``algebraic replication'' to the heptagonal triangle $ABC$ with angles \eqref{eq-1.1}? Expectedly, no new triangle will be discovered, but a new isosceles bisectral triangle for $ABC$ will be detected, namely the triangle $A_2B_2C_1$ for which the equality
\[
A_2B_2=A_2C_1
\]
holds (in \cite{Bankoff&Garfunkel-1973} and \cite[Ch. III]{Modenov-1979}, this fact seems not to be noted).

\textbf{6.} For the first pentadecagonal triangle, we also have
\begin{equation}
\label{eq-4.4}
AA_2=BB_2.
\end{equation}
This gives us an example of a non-isosceles triangle with two equal external bisectors.\footnote{For comparison: according to \emph{Steiner's theorem}, there is no any non-isosceles triangle with two equal internal bisectors.} But for the second pentadecagonal triangle, we have only
\begin{equation}
\label{eq-4.5}
AA_1=BB_2.
\end{equation}
One can prove that in the class of triangles $ABC$ whose angles are commensurable with $\pi$, the equality \eqref{eq-4.4} (\eqref{eq-4.5}, respectively) holds only for the first (second, respectively) pentadecagonal triangle. The proof is based on the Gordan equation.

\begin{figure}
\centering
\includegraphics{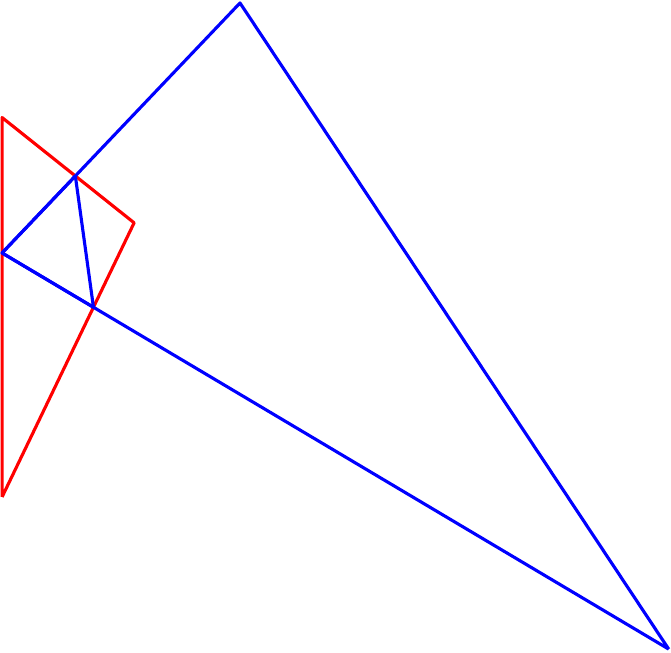}
\caption{The heptagonal triangle and its two isosceles bisectral triangles.}
\end{figure}

\begin{figure}
\centering
\includegraphics{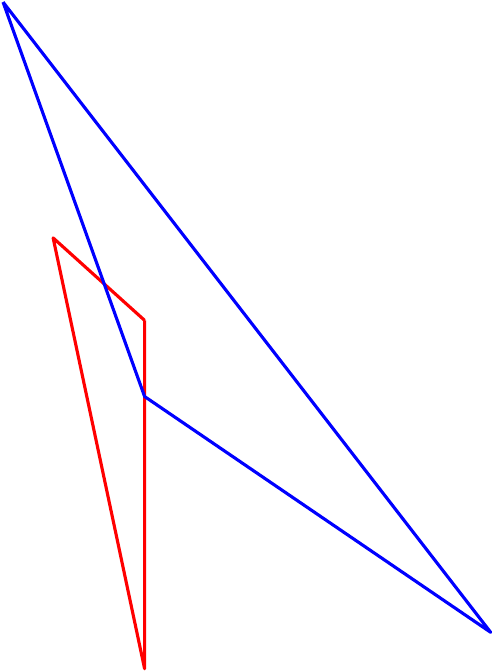}
\caption{The first pentadecagonal triangle and its isosceles bisectral triangle.}
\end{figure}

\begin{figure}
\centering
\includegraphics{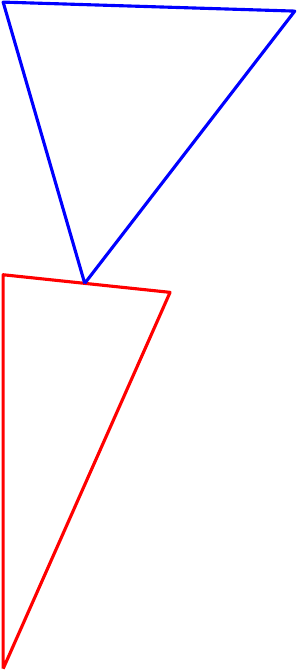}
\caption{The second pentadecagonal triangle and its isosceles bisectral triangle.}
\end{figure}

\end{document}